\newtheorem{theorem}{Theorem}[section]
\newtheorem{corollary}[theorem]{Corollary}
\newtheorem{lemma}[theorem]{Lemma}
\begin{document}

\title{Isospectrality and matrices with concentric circular higher rank numerical ranges}
             \author{Edward Poon\thanks{Department of Mathematics, Embry--Riddle Aeronautical University, 3700 Willow Creed Road, Prescott, AZ 86301, USA; edward.poon@erau.edu}, and
Hugo J. Woerdeman\thanks{Department of Mathematics, Drexel University, 
3141 Chestnut Street, Philadelphia, PA 19104, USA; hjw27@drexel.edu. }}
\date{ }
          \maketitle



\begin{abstract}
We characterize under what conditions $n\times n$ Hermitian matrices $A_1$ and $A_2$ have the property that the spectrum of $\cos t A_1 + \sin t A_2$ is independent of $t$ (thus, the trigonometric pencil $\cos t A_1 + \sin t A_2$ is isospectral). One of the characterizations requires the first $\lceil \frac{n}{2} \rceil$ higher rank numerical ranges of the matrix $A_1+iA_2$ to be circular disks with center 0. Finding the unitary similarity between $\cos t A_1 + \sin t A_2$ and, say, $A_1$ involves finding a solution to Lax's equation.
\end{abstract}

{\bf Keywords:}
Isospectral, trigonometric pencil, higher rank numerical range, Lax pair.

{\bf AMS subject classifications:} 
15A22, 15A60

\section{Introduction}
\label{sec:intro}

Questions regarding rotational symmetry of the classical numerical range as well as the $C-$numerical range have been studied in \cite{ChienTam, Dirretal, Matache, LiSze, LiTsing}; there is a natural connection with isospectral properties. In this paper we study the one parameter pencil ${\rm Re} (e^{-it}B) = \cos t A_1 + \sin t A_2$, where 
$A_1={\rm Re} B = \frac12 (B+B^*) $ and $A_2 = \frac{1}{2i} (B-B^*)$. We say that the pencil is {\it isospectral} when  the spectrum $\sigma ({\rm Re} (e^{it}B))$ of ${\rm Re} (e^{it}B)$ is independent of $t \in [0,2\pi )$; recall that the spectrum of a square matrix is the multiset of its eigenvalues, counting algebraic multiplicity. As our main result (Theorem \ref{thm1}) shows there is a natural connection between isospectrality and the rotational symmetry of the higher rank numerical ranges of $B$. 

Recall that the {\it rank-k numerical range} of a square matrix $B$ is defined by
$$ \Lambda_k (B) = \{ \lambda \in {\mathbb C} : PBP=\lambda P \ {\rm for \ some \ rank}\ k \ {\rm orthogonal \ projection} \ P \} . $$
This notion, which generalizes the classical numerical range when $k=1$ and is motivated by the study of quantum error correction, was introduced in \cite{Choietal1}. In \cite{Choietal2, W} it was shown that $\Lambda_k (B)$ is convex. Subsequently, in \cite{LiSze} a different proof of convexity was given by showing the equivalence 
\begin{equation}\label{CK} z \in \Lambda_k (B) \ \Leftrightarrow \ {\rm Re}(e^{-it}z) \le \lambda_{k} ({\rm Re}(e^{-it}B))\ {\rm for\  all\ } t\in[0,2\pi).\end{equation} Here $\lambda_k (A)$ denotes the $k$th largest eigenvalue of a Hermitian matrix $A$.

In order to state our main result, we consider words $w$ in two letters. For instance, $PPQ$, $PQPQPP$ are words in the letters $P$ and $Q$. The length of a word $w$ is denoted by $|w|$. When we write ${\rm na}(w,P)=l$ we mean that $P$ appears $l$ times in the word $w$ (na=number of appearances).  The trace of a square matrix $A$ is denoted by  ${\rm Tr}\ A$.
\begin{theorem}\label{thm1} Let $B\in{\mathbb C}^{n\times n}$. The following are equivalent.
\begin{itemize}
\item[(i)] The pencil ${\rm Re}(e^{-it}B) = \cos t \ {\rm Re} B +\sin t\ {\rm Im} B$ is isospectral.
\item[(ii)] $\sum_{ |w|=k, {\rm na}(w,B^*)=l}  {\rm Tr}\ w(B,B^*) = 0$, $0\le l < \frac{k}{2}$, $1\le k \le n$.
\item[(iii)] For $1 \leq k \leq \lceil n/2 \rceil$ the rank-$k$ numerical range of $B$ is a circular disk with center $0$, and ${\rm rank} \, {\rm Re} (e^{-it}B)$ is independent of $t$.
\item[(iv)] ${\rm Re}(e^{-it}B)$ is unitarily similar to ${\rm Re}(B)$ for all $t\in [0,2\pi)$.
\end{itemize}
Any of the conditions (i)-(iv) imply that $B$ is nilpotent.
\end{theorem}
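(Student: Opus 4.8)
The plan is to prove the equivalence of (i)--(iv) first, and then extract nilpotency as a corollary of whichever characterization is most convenient; condition (ii) with $k=1,\dots,n$ is the natural candidate. First I would establish (i)$\Leftrightarrow$(ii) by a generating-function/power-trace argument: the characteristic polynomial of ${\rm Re}(e^{-it}B)$ has coefficients that are symmetric functions of its eigenvalues, hence polynomials in the power sums ${\rm Tr}\,({\rm Re}(e^{-it}B))^k$; expanding ${\rm Re}(e^{-it}B) = \tfrac12(e^{-it}B + e^{it}B^*)$ and multiplying out, ${\rm Tr}\,({\rm Re}(e^{-it}B))^k = 2^{-k}\sum_{|w|=k} e^{i(k-2\,{\rm na}(w,B^*))t}\,{\rm Tr}\,w(B,B^*)$. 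Collecting by the frequency $k-2l$ where $l={\rm na}(w,B^*)$, the pencil is isospectral iff every such power sum is constant in $t$ for $1\le k\le n$ (it suffices to go up to $n$ since higher power sums are determined by the first $n$), which by linear independence of the exponentials $e^{i(k-2l)t}$ amounts to vanishing of the coefficient of each nonzero frequency; the $l$ and $k-l$ contributions are complex conjugates, so it is enough to require vanishing for $0\le l < k/2$, giving exactly (ii).

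Next I would handle (i)$\Leftrightarrow$(iii) using the Li--Sze equivalence \eqref{CK}: $z\in\Lambda_k(B)$ iff ${\rm Re}(e^{-it}z)\le\lambda_k({\rm Re}(e^{-it}B))$ for all $t$. If the pencil is isospectral then $\lambda_k({\rm Re}(e^{-it}B))=\lambda_k({\rm Re}B)=:r_k$ is constant, so $\Lambda_k(B)=\{z:{\rm Re}(e^{-it}z)\le r_k\ \forall t\}$ is exactly the closed disk of radius $r_k$ centered at $0$, and rank-constancy of ${\rm Re}(e^{-it}B)$ is immediate from isospectrality; conversely, if for $1\le k\le\lceil n/2\rceil$ each $\Lambda_k(B)$ is a disk centered at $0$ (say of radius $r_k$) then \eqref{CK} forces $\lambda_k({\rm Re}(e^{-it}B))=r_k$ for those $k$, and by applying the same to $-B$ (whose rank-$k$ numerical ranges are the negatives, hence also centered disks) one controls $\lambda_{n+1-k}({\rm Re}(e^{-it}B))$ for $k\le\lceil n/2\rceil$ as well; together with the rank-constancy hypothesis, which pins down the multiplicity of the eigenvalue $0$, this determines all $n$ eigenvalues of ${\rm Re}(e^{-it}B)$ independently of $t$, giving (i). Finally (iv)$\Rightarrow$(i) is trivial, and (i)$\Rightarrow$(iv) I would obtain either by Lax-pair integration as the abstract suggests, or more cheaply by noting that a real-analytic family of Hermitian matrices with constant spectrum is unitarily conjugate along the family (solve $\dot U = K(t)U$ with $K(t)$ skew-Hermitian chosen from the derivative of the spectral projections).

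For the nilpotency conclusion, I would use (ii) with $k=1,\dots,n$ applied to the word $w = B^k$ (which has ${\rm na}(w,B^*)=0<k/2$ for every $k\ge1$), giving ${\rm Tr}\,B^k = 0$ for $1\le k\le n$; by Newton's identities this forces every elementary symmetric function of the eigenvalues of $B$ to vanish, so the characteristic polynomial of $B$ is $\lambda^n$ and $B$ is nilpotent. The main obstacle I anticipate is the converse direction of (iii)$\Rightarrow$(i): showing that controlling only the first $\lceil n/2\rceil$ rank-$k$ numerical ranges, rather than all $n$, suffices to pin down the entire spectrum of ${\rm Re}(e^{-it}B)$ — this is where the rank-constancy hypothesis and the symmetry trick with $-B$ (together with the bound on the number of distinct eigenvalue ``levels'' forced by the pigeonhole structure on $n$ eigenvalues split around $0$) must be combined carefully, and it is presumably the reason the bound is $\lceil n/2\rceil$ and not $n$.
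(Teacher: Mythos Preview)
Your approach matches the paper's closely for (i)$\Leftrightarrow$(ii) and the nilpotency conclusion (both expand ${\rm Tr}[{\rm Re}(e^{-it}B)]^k$, read off Fourier coefficients, and invoke Newton's identities). Two points deserve correction, however.

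First, (i)$\Leftrightarrow$(iv) is immediate and needs no Lax-pair machinery: ${\rm Re}(e^{-it}B)$ and ${\rm Re}(B)$ are Hermitian, and two Hermitian matrices with the same spectrum (counting multiplicity) are always unitarily similar via their spectral decompositions. The paper simply says this equivalence is obvious; the Lax discussion in Section~\ref{sec3} is about producing a \emph{smooth} $U(t)$, which is a separate (and harder) question not needed for the theorem.

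Second, and more substantively, in your (iii)$\Rightarrow$(i) sketch the assertion that ``\eqref{CK} forces $\lambda_k({\rm Re}(e^{-it}B))=r_k$'' is \emph{false} when $r_k=0$: for the diagonal matrix with entries $1,0,-1,i$ one has $\Lambda_2=\{0\}$ yet $\lambda_2({\rm Re}(e^{-it}B))$ is not identically zero (this is exactly the Remark following the paper's proof). The characterization \eqref{CK} only gives $\lambda_k\ge r_k$ in general; equality is what one gets when $r_k>0$ (the supporting half-plane in each direction is tangent to the disk), but when $r_k=0$ one only knows $\lambda_k(t)\ge 0$ with equality at \emph{some} $t$. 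The paper therefore splits into two cases: if all radii $r_k>0$ the argument goes through as you wrote; if $r_\ell=0$ for some least $\ell$, then $\lambda_k$ is a positive constant for $k<\ell$, while for $\ell\le k\le\lceil n/2\rceil$ one only knows $\lambda_k(t)\ge 0$ with a zero at some $t$ --- and it is precisely here that the constant-rank hypothesis is invoked to force $\lambda_k\equiv 0$. So rank constancy is not merely ``pinning down the multiplicity of $0$'' as a separate bookkeeping step; it is the mechanism that replaces the failed implication when the disk degenerates to a point. You correctly flagged this as the delicate spot, but your outline applies the rank hypothesis in the wrong place.
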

Note that for a given matrix $B$ it is easy to check whether Theorem \ref{thm1}(ii) holds. For instance, when $n=5$ one needs to check that $B$ is nilpotent (or, equivalently, ${\rm Tr} B^k=0$, $k=1,\ldots, 5$) and satisfies
$${\rm Tr} B^2B^*={\rm Tr} B^3B^*={\rm Tr} B^4B^*={\rm Tr} B^3B^{2*}+{\rm Tr} B^2B^{*}BB^{*}=0.$$

The paper is organized as follows. In Section \ref{sec2} we prove our main result. In Section \ref{sec3} we discuss the connection with Lax pairs.

\section{Isospectral paths}\label{sec2}

We will use the following lemma.

\begin{lemma}\label{newton} Let $M(t)\in{\mathbb C}^{n\times n}$ for $t$ ranging in some domain. Then the spectrum $\sigma(M(t))$ is independent of $t$ if and only if ${\rm Tr} M(t)^k$, $k=1,\ldots , n$, are independent of $t$.
\end{lemma}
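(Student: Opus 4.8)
The plan is to translate the statement about spectra into one about characteristic polynomials and then quote Newton's identities. The spectrum $\sigma(M(t))$, as a multiset counting algebraic multiplicity, is exactly the multiset of roots of the characteristic polynomial
$$ \chi_{M(t)}(\lambda) = \det(\lambda I - M(t)) = \lambda^n - e_1(t)\lambda^{n-1} + e_2(t)\lambda^{n-2} - \cdots + (-1)^n e_n(t), $$
where $e_j(t)$ denotes the $j$th elementary symmetric function of the eigenvalues $\lambda_1(t),\ldots,\lambda_n(t)$ of $M(t)$. Hence $\sigma(M(t))$ is independent of $t$ precisely when $\chi_{M(t)}$ is, i.e., when $e_1(t),\ldots,e_n(t)$ are all independent of $t$. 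On the other hand ${\rm Tr}\, M(t)^k = \sum_i \lambda_i(t)^k = p_k(t)$, the $k$th power-sum symmetric function of the eigenvalues, so the lemma reduces to the purely algebraic assertion that $e_1(t),\ldots,e_n(t)$ are all constant if and only if $p_1(t),\ldots,p_n(t)$ are all constant.

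The forward implication is immediate: if the multiset $\{\lambda_1(t),\ldots,\lambda_n(t)\}$ does not depend on $t$, then neither does any symmetric function of it, in particular $p_k(t)$ for each $k$. For the converse I would invoke Newton's identities, which over a field of characteristic zero read, for $1 \le k \le n$ (with $e_0 = 1$),
$$ k\, e_k = \sum_{i=1}^{k} (-1)^{i-1} e_{k-i}\, p_i . $$
Since we work over $\mathbb{C}$ we may divide by $k$, so this recursion expresses $e_k(t)$ as a polynomial in $e_1(t),\ldots,e_{k-1}(t)$ and $p_1(t),\ldots,p_k(t)$; by induction on $k$, constancy of $p_1(t),\ldots,p_n(t)$ forces constancy of $e_1(t),\ldots,e_n(t)$, which by the first paragraph completes the proof.

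There is no real obstacle in the argument; the only points worth flagging are, first, that one genuinely needs characteristic zero so that the divisions by $1,2,\ldots,n$ in Newton's identities make sense (the statement can fail over fields of small positive characteristic), which here is automatic; and second, that only the first $n$ power sums enter because the characteristic polynomial determines all higher ones: once $e_1,\ldots,e_n$ are fixed, the Cayley--Hamilton recursion $p_k = e_1 p_{k-1} - e_2 p_{k-2} + \cdots + (-1)^{n-1} e_n p_{k-n}$ pins down $p_k$ for every $k > n$ as well. Note also that no regularity of $t \mapsto M(t)$ is used: the equivalence holds value-by-value.
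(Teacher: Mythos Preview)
Your proof is correct and follows essentially the same approach as the paper: both argue the forward direction is trivial and invoke Newton's identities for the converse, noting that the first $n$ power sums determine the coefficients of the characteristic polynomial and hence the spectrum. Your version is simply more explicit about the details (writing out the recursion, flagging the characteristic-zero hypothesis, and remarking that no regularity in $t$ is needed), but the underlying argument is the same.
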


\begin{proof} The forward direction is trivial. For the other direction, use Newton's identities to see that the first $n$ moments of the zeros of a degree $n$ monic polynomial uniquely determine the coefficients of the polynomial, and thus the zeros of the polynomial. This implies that ${\rm Tr} M(t)^k$, $k=1,\ldots , n$, uniquely determine the eigenvalues of the $n\times n$ matrix. Thus, if ${\rm Tr} M(t)^k$, $k=1,\ldots , n$, are independent of $t$, then the spectrum of $M(t)$ is independent of $t$. \end{proof}

\noindent {\it Proof of Theorem \ref{thm1}.} Consider the trigonometric polynomials $f_k(t)= 2^k {\rm Tr} [{\rm Re}(e^{-it}B)]^k$, $k=1,\ldots , n$. The coefficient of $e^{i(2l-k)t}$ in $f_k(t)$ is given by $\sum_{ |w|=k, {\rm na}(w,B^*)=l} {\rm Tr}\ w(B,B^*)$. By Lemma \ref{newton} the spectrum of ${\rm Re}(e^{-it}B) $ is independent of $t$ if and only for $k=1,\ldots, n$ and $2l\neq k$ the coefficient of $e^{i(2l-k)t}$ in $f_k(t)$ is 0. Due to symmetry, when they are 0 for $2l< k$ they will be 0 for $2l> k$. This gives the equivalence of (i) and (ii).

In particular note that when $l=0$, we find that $ {\rm Tr} B^k =0$, $k=1,\ldots, n$, and thus $B$ is nilpotent.

Next, let us prove the equivalence of (i) and (iii). Assuming (i) we have that ${\rm Re} B$ and $-{\rm Re} B$ have the same spectrum, so ${\rm Re} B$ has $\lceil n/2 \rceil$ nonnegative eigenvalues. As the spectrum of ${\rm Re}(e^{-it}B) $ is independent of $t$, we have that ${\rm Re}(e^{-it}B)$ has $\lceil n/2 \rceil$ nonnegative eigenvalues for all $t$, guaranteeing the rank-$k$ numerical range is nonempty for $k \leq \lceil n/2 \rceil$. Next, since $\lambda_{k} ({\rm Re}(e^{-it}B))$ is independent of $t$, it immediately follows from the characterization \eqref{CK} that $\Lambda_k(B)$, $1 \leq k \leq \lceil n/2 \rceil$, is a circular disk with center 0. Also, (i) clearly implies that ${\rm rank} (e^{-it}B)$ is independent of $t$.

Conversely, let us assume (iii). If the rank $k$-numerical range of $B$ is $\{z : |z| \leq r\}$ for some $r > 0$ then 
$\lambda_{k} ({\rm Re}(e^{-it}B))$ is constant. This also yields that $\lambda_{n+1-k}({\rm Re}(e^{-it}B)) = -\lambda_k(-{\rm Re}(e^{-it}B))$. When for $1 \leq k \leq \lceil n/2 \rceil$ we have that $\Lambda_k (B)$ has a positive radius, we obtain that (i) holds. Next, let us suppose $\Lambda_{\ell} (B)$ has radius zero, and $\ell$ is the least integer with this property. 
Then, as before, we may conclude that $\lambda_{k} ({\rm Re}(e^{-it}B))$ is a positive constant for $1\leq k <\ell$. We also have, for $\ell \leq k \leq \lceil n/2 \rceil$, that $\lambda_{k} ({\rm Re}(e^{-it}B)) = 0 $ for some $t$. As we require ${\rm rank} \, {\rm Re}(e^{-it}B)$ to be independent of $t$, we find that for $\ell \leq k \leq \lceil n/2 \rceil$, $\lambda_{k} ({\rm Re}(e^{-it}B)) = 0 $ for all $t$. Again using $\lambda_{n+1-k}({\rm Re}(e^{-it}B)) = -\lambda_k(-{\rm Re}(e^{-it}B))$, we arrive at (i).

The equivalence of (i) and (iv) is obvious.
\hfill $\square$

\medskip

\noindent {\bf Remark.} The condition that ${\rm rank} \, {\rm Re}(e^{-it}B)$ is independent of $t$ in Theorem \ref{thm1}(iii) is there to handle the case when $\Lambda_k (B)$ has a zero radius. Indeed, it can happen that $\Lambda_k (B) =\{ 0 \}$ without $\lambda_{k} ({\rm Re}(e^{-it}B))$ being independent of $t$; one such example is a diagonal matrix with eigenvalues $1,0,-1,i$. It is unclear whether this can happen for a matrix whose higher rank numerical ranges are disks centered at 0. 

\bigskip

For sizes 2, 3, and 4, the conditions in Theorem \ref{thm1} are equivalent to $B$ being nilpotent and the numerical range of $B$ being rotationally symmetric.

\begin{corollary}\label{cor} Let $B \in {\mathbb C}^{n\times n}$, $n\le 4$. Then the spectrum of ${\rm Re}(e^{-it}B) = \cos t \ {\rm Re} B +\sin t\ {\rm Im} B$ is independent of $t$  if and only if $B$ is nilpotent and the numerical range is a disk centered at 0.
\end{corollary}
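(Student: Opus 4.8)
The plan is to deduce Corollary \ref{cor} from Theorem \ref{thm1} by showing that, in sizes $n \le 4$, condition (iii) simplifies to ``$B$ nilpotent and $W(B)$ a disk centered at $0$.'' First I would observe that for $n \le 2$ the statement $\lceil n/2 \rceil = 1$, so (iii) already reads ``$\Lambda_1(B) = W(B)$ is a disk centered at $0$ and $\operatorname{rank}{\rm Re}(e^{-it}B)$ is independent of $t$''; since $W(B)$ being a disk centered at $0$ forces $0 \in W(B)$ and hence (together with nilpotency, which comes for free from Theorem \ref{thm1}) pins down the rank behaviour, the only work is to check the rank condition is automatic. For $n = 3$ and $n = 4$ we still have $\lceil n/2 \rceil = 2$, so a priori (iii) involves $\Lambda_2(B)$ as well; the crux is to show that in these small sizes the hypothesis ``$W(B)$ is a disk centered at $0$ and $B$ nilpotent'' already forces $\Lambda_2(B)$ to be a disk centered at $0$ (necessarily $\{0\}$ or empty) and forces the rank of ${\rm Re}(e^{-it}B)$ to be constant.

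The key steps, in order, are: (1) Reduce to checking that nilpotency plus rotational symmetry of $W(B)$ implies all of condition (iii). (2) For the forward implication of the corollary, note Theorem \ref{thm1} already gives nilpotency and that $\Lambda_1(B) = W(B)$ is a disk centered at $0$, so that direction is immediate. (3) For the reverse implication, use a canonical form: a nilpotent $n \times n$ matrix with $n \le 4$ and rotationally symmetric numerical range can be brought by unitary similarity to a direct sum of a zero block and a single nilpotent Jordan-type block, or (for $n=4$) possibly two $2\times 2$ nilpotent blocks with related singular values; here I would invoke the structure of unitarily irreducible nilpotents together with the fact that $W(B)$ rotationally symmetric and $B = N$ nilpotent of size $\le 3$ forces $N$ unitarily similar to a weighted shift (for which $W$ is automatically a disk), while size $4$ needs a short case analysis. (4) In each canonical case, verify directly that $\cos t\,{\rm Re} B + \sin t\,{\rm Im} B$ has $t$-independent spectrum — equivalently verify the finitely many trace conditions in Theorem \ref{thm1}(ii), which for $n \le 4$ reduce to $\operatorname{Tr} B^k = 0$ ($k \le n$) together with $\operatorname{Tr} B^2 B^* = \operatorname{Tr} B^3 B^* = 0$ — using the weighted-shift form where the relevant words $w(B,B^*)$ have easily computed traces.

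Alternatively, and perhaps more cleanly, I would argue entirely through condition (ii): for $n \le 4$ the only nontrivial equations beyond nilpotency are $\operatorname{Tr} B^2 B^* = 0$ (for $n = 3, 4$) and $\operatorname{Tr} B^3 B^* = 0$ (for $n = 4$), and I would show that rotational symmetry of $W(B)$ combined with nilpotency already implies these. Rotational symmetry of $W(B)$ says $W(e^{i\theta} B) = W(B)$ for all $\theta$, which by uniqueness considerations forces $e^{i\theta}B$ to be unitarily similar to $B$ for a dense set of $\theta$ (here one would cite the cited papers \cite{LiSze, LiTsing} or argue via the boundary generating curve), whence $\operatorname{Tr} w(e^{i\theta}B, e^{-i\theta}B^*) = \operatorname{Tr} w(B, B^*)$; picking words with ${\rm na}(w, B^*) \ne |w|/2$ gives a factor $e^{i\theta(2l - k)} \ne 1$, killing the trace. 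The main obstacle I anticipate is justifying the implication ``$W(B)$ rotationally symmetric $\Rightarrow$ $e^{i\theta}B$ unitarily similar to $B$'' cleanly in small dimensions — in general rotational symmetry of the numerical range does not force unitary similarity, but for nilpotent matrices of size $\le 4$ the low-dimensional structure (in particular, a nilpotent is determined up to unitary similarity by relatively little data) should make it work; pinning down exactly why $n = 4$ goes through while the diagonal counterexample $\{1,0,-1,i\}$ in the Remark does not (it is not nilpotent, and its $W$ is not a disk) is the delicate point that the proof must address.
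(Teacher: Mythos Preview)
Your ``alternatively'' approach starts exactly where the paper does: reduce to condition (ii) of Theorem~\ref{thm1}, which for $n\le 4$ amounts to nilpotency together with $\Tr B^2B^*=0$ (when $n\ge 3$) and $\Tr B^3B^*=0$ (when $n=4$). The paper then finishes in one line by invoking Remarks~1--3 of \cite{Matache}, which establish directly (via the Kippenhahn/boundary generating curve description of $W(B)$) that for a nilpotent matrix of size $\le 4$ these trace conditions are equivalent to $W(B)$ being a disk centered at $0$.

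The gap in your plan is the mechanism you propose for deriving those trace identities. You want to pass through ``$W(B)$ circular and $B$ nilpotent $\Rightarrow$ $e^{i\theta}B$ unitarily similar to $B$ for a dense set of $\theta$.'' This implication is false already at $n=4$, and the paper's own example \eqref{B} is a counterexample: that matrix is nilpotent, satisfies all of Theorem~\ref{thm1} (so in particular $W(B)$ is a disk centered at $0$), yet $\Tr B^3B^*BB^*=-1\neq 0$, so by Specht's theorem $e^{i\theta}B$ is \emph{not} unitarily similar to $B$ except for isolated $\theta$. (Note that ``dense set of $\theta$'' buys nothing extra: by continuity of traces and Specht, unitary similarity on a dense set forces it for all $\theta$.) The same example kills step~(3) of your first approach: the canonical forms you list --- a weighted shift, or a direct sum of smaller nilpotent blocks --- all satisfy Theorem~\ref{constant2}(iv), hence Theorem~\ref{constant2}(i), which \eqref{B} does not. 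So a $4\times 4$ nilpotent with circular numerical range need not be unitarily similar to a weighted shift or a direct sum of such.

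In short, you correctly flagged the obstacle but guessed wrong about whether it bites at $n=4$; it does. The fix is to avoid the unitary-similarity detour entirely and instead relate $\Tr B^2B^*$ and $\Tr B^3B^*$ to circularity of $W(B)$ through the boundary generating curve, as in \cite{Matache}.
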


\begin{proof} When $n=2$, condition (ii) in Theorem \ref{thm1} comes down to ${\rm Tr} B={\rm Tr} B^2=0$. When $n=3$ we get the added conditions that ${\rm Tr} B^3={\rm Tr} B^2B^*=0$. When $n=4$, we also need to add the conditions ${\rm Tr} B^4={\rm Tr} B^3B^*=0$. The condition that  ${\rm Tr} B^k=0$, $1\le k \le n$, is equivalent to $B$ being nilpotent. The corollary now easily follows by invoking Remarks 1-3 in \cite{Matache}. \end{proof} 

To show that Corollary \ref{cor} does not hold for $n\ge 5$, note that the following example from \cite{Matache},
$$ B= \begin{pmatrix} 0 & 2 & 0 & 0 & 0 \cr 0 & 0 & 0 & 0 &0 \cr 0 & 0 & 0 & 1 & 1 \cr 0 & 0 & 0 & 0 & 1 \cr 0 & 0 & 0 & 0 & 0 \end{pmatrix},$$
is nilpotent, has the unit disk as its numerical range, but ${\rm Tr} B^2B^*=1\neq 0$.

\section{Connection with Lax pairs}\label{sec3}

A {\it Lax pair} is a pair $
L(t),P(t) $ of Hilbert space operator valued functions
 satisfying Lax's equation:
$$ {\frac  {dL}{dt}}=[P,L], $$ where $[X,Y] = XY-YX$. The notion of Lax pairs goes back to \cite{Lax}. If we start with $P(t)$, and one solves the initial value differential equation 
\begin{equation}\label{Ut} {\frac  {d}{dt}}U(t)=P(t)U(t),\qquad U(0)=I, \end{equation}
then $L(t):=U(t)L(0)U(t)^{{-1}}$ is a solution to Lax's equation. Indeed,
$$ L'(t) = {\frac  {d}{dt}} [U(t)L(0)U(t)^{{-1}}]= $$ $$ P(t) U(t) L(0) U(t)^{{-1}} - U(t) L(0) U(t)^{{-1}} P(t) U(t) U(t)^{-1}  
= P(t)L(t) - L(t) P(t) . $$
This now yields that $L(t)$ is isospectral.
When $P(t)$ is skew-adjoint, then $U(t)$ is unitary.

In our case we have that $L(t) = {\rm Re} (e^{-it} B )$, and our $U(t)$ will be unitary. This corresponds to $P(t)$ being skew-adjoint. When we are interested in the case when  $P(t) \equiv K$ is constant, we have that $U(t)=e^{tK}$.
Thus, we are interested in finding $K$ so that $e^{-tK} L(t) e^{tK} = L(0)$, where $L(t) = A_1 \cos t + A_2 \sin t$. If we now differentiate both sides, we find 
$$ -e^{-tK} K L(t) e^{tK} + e^{-tK} L'(t) e^{tK} + e^{-tK} L(t) K e^{tK} = 0. $$
Multiplying on the left by $e^{tK}$ and on the right by $e^{-tK}$, we obtain 
$$-A_1 \sin t + A_2 \cos t = L'(t) = [K,L(t)] = [K,A_1 \cos t + A_2 \sin t] . $$
This corresponds to $[K,A_1] = A_2$ and $[K,A_2] = -A_1$, which is equivalent to $[K,B] = -iB$. We address this case in the following result, which is partially due to \cite{LiTsing}. 
\medskip

\begin{theorem}\label{constant2}
Let $B\in {\mathbb C}^{n\times n}$. 
The following are equivalent.
\begin{itemize}
\item[(i)] $e^{it}B$ is unitarily similar to $B$ for all $t\in[0,2\pi)$. 
\item[(ii)] ${\rm Tr}\ w(B,B^*)=0$ for all words $w$ with ${\rm na} (w,B) \neq {\rm na} (w,B^*)$.
\item[(iii)] There exists a skew-adjoint matrix $K $ satisfying $[K,B] = -iB$.
\item[(iv)] There exists a unitary matrix $U$ such that $UBU^* = B_1 \oplus \dots \oplus B_r$ is block diagonal and each submatrix $B_j$ is a partitioned matrix (with square matrices on the block diagonal) whose only nonzero blocks are on the block superdiagonal.   
\end{itemize}
 \end{theorem}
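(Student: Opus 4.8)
\emph{Proof proposal.} The plan is to treat (iii) as a hub: I would prove $(\mathrm{iii})\Leftrightarrow(\mathrm{iv})$, $(\mathrm{iii})\Rightarrow(\mathrm{i})$, $(\mathrm{i})\Rightarrow(\mathrm{iii})$, and separately $(\mathrm{i})\Leftrightarrow(\mathrm{ii})$. Two of these are bookkeeping. Writing $d(w)={\rm na}(w,B)-{\rm na}(w,B^*)$, the scalars factor out of a word, so $w(e^{it}B,e^{-it}B^*)=e^{itd(w)}\,w(B,B^*)$; since traces of words are unitary invariants, $(\mathrm{i})\Rightarrow(\mathrm{ii})$ is immediate (when $d(w)\neq0$ the relation $e^{itd(w)}\,{\rm Tr}\,w(B,B^*)={\rm Tr}\,w(B,B^*)$ forces the trace to be $0$), and for the converse $(\mathrm{ii})\Rightarrow(\mathrm{i})$ I would invoke Specht's theorem: (ii) makes ${\rm Tr}\,w(B,B^*)={\rm Tr}\,w(e^{it}B,e^{-it}B^*)$ hold for every word $w$, hence $B$ and $e^{it}B$ are unitarily similar. (Taking $w=B^k$ also shows ${\rm Tr}\,B^k=0$ for all $k$, so $B$ is nilpotent, though this is not part of the statement.)

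For $(\mathrm{iii})\Leftrightarrow(\mathrm{iv})$ I would substitute $H=iK$, so that $K$ skew-adjoint with $[K,B]=-iB$ becomes $H$ Hermitian with $[H,B]=B$, i.e. $HB=B(H+I)$. Then $B$ maps the $\lambda$-eigenspace $E_\lambda$ of $H$ into $E_{\lambda+1}$, and, taking adjoints, $B^*$ maps $E_\lambda$ into $E_{\lambda-1}$. Decomposing $\sigma(H)$ into maximal runs of consecutive integer-spaced eigenvalues $\{\mu,\mu+1,\dots,\mu+q\}$, the orthogonal sums $V_R=\bigoplus_{\nu\in R}E_\nu$ then reduce $B$, and ordering an orthonormal eigenbasis of $H$ by run, and within each run by decreasing eigenvalue, exhibits $UBU^*$ in the form (iv). Conversely, given (iv), on the $\ell$-th summand I would take $K$ to be the block-scalar matrix $i\,(I\oplus 2I\oplus 3I\oplus\cdots)$ with blocks conformal to the partition; it is skew-adjoint, and multiplying out its action on the (only nonzero) superdiagonal blocks gives $[K,B]=-iB$. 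The implication $(\mathrm{iii})\Rightarrow(\mathrm{i})$ is the constant-generator computation in the spirit of Section \ref{sec3}: with $g(t)=e^{-tK}Be^{tK}$ one has $g'(t)=e^{-tK}[B,K]e^{tK}=i\,g(t)$ and $g(0)=B$, whence $e^{-tK}Be^{tK}=e^{it}B$; as $e^{-tK}$ is unitary this is (i).

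The genuinely substantive step is $(\mathrm{i})\Rightarrow(\mathrm{iii})$: producing an infinitesimal generator for the rotation action on the unitary orbit of $B$. I would argue as follows. Assuming $B\neq0$ (the case $B=0$ being trivial), set $G=\{U\in U(n):UBU^*\in\mathbb{T}B\}$. Since $\mathbb{T}B$ is compact, $G$ is closed in $U(n)$, hence a compact Lie group, and $\rho(U)=t$, defined by $UBU^*=e^{-it}B$ (unambiguous modulo $2\pi$ since $B\neq0$), is a continuous---therefore smooth---homomorphism $\rho\colon G\to\mathbb{T}$, surjective by (i). As $\mathbb{T}$ is infinite and $G$ has finitely many components, $\rho$ cannot be constant on the identity component, so $d\rho\colon\mathfrak{g}\to\mathbb{R}$ is nonzero; choosing $K_0\in\mathfrak{g}\subseteq\mathfrak{u}(n)$ with $d\rho(K_0)\neq0$ and rescaling to $K=K_0/d\rho(K_0)$, the one-parameter subgroup $e^{tK}$ lies in $G$ with $\rho(e^{tK})=t$, so $e^{tK}Be^{-tK}=e^{-it}B$; differentiating at $t=0$ gives $[K,B]=-iB$ with $K$ skew-adjoint, which is (iii).

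I expect $(\mathrm{i})\Rightarrow(\mathrm{iii})$ to be the main obstacle, with the compactness of $U(n)$ as the key ingredient; alternatively, this implication (or the equivalence $(\mathrm{i})\Leftrightarrow(\mathrm{iv})$) can be taken from \cite{LiTsing}, or one could instead try the purely linear-algebraic route $(\mathrm{ii})\Rightarrow(\mathrm{iv})$, first splitting off the reducing subspace $\ker B\cap\ker B^*$ on which $B$ and $B^*$ both vanish, and then inducting on $n$ while using the vanishing traces to pin down the grading. The remaining points---convergence and uniqueness in the ODE, conformality of the partitions, and that the $V_R$ really are reducing---are routine and I would not dwell on them.
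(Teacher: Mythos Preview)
Your proposal is correct. Three of your four links coincide with the paper's: $(\mathrm{i})\Leftrightarrow(\mathrm{ii})$ via Specht, $(\mathrm{iv})\Rightarrow(\mathrm{iii})$ via the block-scalar $K=i(I\oplus 2I\oplus\cdots)$, and $(\mathrm{iii})\Rightarrow(\mathrm{i})$ via the exponential/adjoint computation (the paper writes it as the $\mathrm{ad}$-series, you as the ODE $g'=ig$; these are the same).

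The genuine difference is how the cycle closes. The paper does not prove $(\mathrm{i})\Rightarrow(\mathrm{iii})$ or $(\mathrm{iii})\Rightarrow(\mathrm{iv})$ at all; it simply imports $(\mathrm{i})\Leftrightarrow(\mathrm{iv})$ from \cite[Theorem~2.1]{LiTsing} and then runs $(\mathrm{iv})\Rightarrow(\mathrm{iii})\Rightarrow(\mathrm{i})$. You instead make the argument self-contained: your $(\mathrm{iii})\Rightarrow(\mathrm{iv})$ is the clean eigenspace-grading argument for the Hermitian $H=iK$ (the relation $HB=B(H+I)$ forces $B\colon E_\lambda\to E_{\lambda+1}$, and the maximal integer-spaced runs give the reducing summands), and your $(\mathrm{i})\Rightarrow(\mathrm{iii})$ is a compact-Lie-group argument---the surjective character $\rho\colon G\to\mathbb{T}$ cannot kill the identity component since $\pi_0(G)$ is finite, so $d\rho\neq0$ and a suitably scaled $K\in\mathfrak{g}\subset\mathfrak{u}(n)$ does the job. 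What your route buys is independence from \cite{LiTsing} at the cost of invoking a little Lie theory (closed-subgroup theorem, automatic smoothness of continuous homomorphisms); what the paper's route buys is brevity. You already flag this trade-off in your final paragraph, and either choice is fine.
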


Recall that Specht's theorem \cite{Specht} says that $A$ is unitarily similar to $B$ if and only if ${\rm Tr}\ w(A,A^*) = {\rm Tr}\  w(B,B^*)$ for all words $w$.

\begin{proof} By Specht's theorem  $e^{it}B$ is unitarily similar to $B$ for all $t$ if and only if ${\rm Tr}\ w(e^{it}B,e^{-it}B^*) = {\rm Tr}\ w(B,B^*)$ for all $t$ and all words. When ${\rm na} (w,B) \neq {\rm na} (w,B^*)$ this can only happen when ${\rm Tr}\ w(B,B^*)=0$. When ${\rm na} (w,B) = {\rm na} (w,B^*)$, we have that ${\rm Tr}\ w(e^{it}B,e^{-it}B^*)$ is automatically independent of $t$. This proves the equivalence of (i) and (ii).

The equivalence of (i) and (iv) is proven in \cite[Theorem 2.1]{LiTsing}. 
We will finish the proof by proving (iv) $\to$ (iii) $\to$ (i). 

Assuming (iv), let $K_j$ be a block diagonal matrix partitioned in the same manner as $B_j$ and whose $m$th diagonal block equals $imI$.  Then $[K_j,B_j] = -i B_j$.  Let $K = U^*(K_1 \oplus \dots \oplus K_r)U$.  Then $[K,B] = -iB$,
proving (iii).

When (iii) holds, let $U(t) = e^{-Kt}$. 
Denote ${\rm ad}_X Y = [X,Y]$. Then $e^X Y e^{-X} = \sum_{m=0}^\infty \frac{1}{m!} {\rm ad}_X^m Y$, and (iii) yields that 
$$ U(t) B U(t)^* = e^{-tK} B e^{tK} = \sum_{m=0}^{\infty} \frac{1}{m!} {\rm ad}_{-tK}^m B = \sum_{m=0}^{\infty} \frac{(it)^m}{m!} B = e^{it} B,$$
yielding (i).
\end{proof}

It is clear that if $B$ satisfies Theorem \ref{constant2}(i) it certainly satisfies Theorem \ref{thm1}(i). In general the converse will not be true, and the size of such a counterexample must be at least 4; indeed, if $B$ is a strictly upper triangular $3\times 3$ matrix with ${\rm Tr} B^2B^*=0$ at least one of the entries above the diagonal is zero, making $B$ satisfy Theorem \ref{constant2}(iv).
An example that satisfies the conditions of Theorem \ref{thm1} but does not satisfy those of Theorem \ref{constant2} is
\begin{equation}\label{B} B=\begin{pmatrix} 0 & 1 & 1 &0 \cr 0 & 0 & 1 & -1  \cr 0 & 0 & 0 & 1  \cr 0 &0&0&0 \end{pmatrix}. 
\end{equation}
Indeed, it is easy to check that 
${\rm Tr} B^2B^*={\rm Tr} B^3B^*=0$, but ${\rm Tr} B^3 B^* B B^* =-1 \neq 0$.  A $5 \times 5$ example satisfying the conditions of  
Theorem \ref{thm1} but not those of Theorem \ref{constant2} is
\begin{equation*}
	\begin{pmatrix} 0 & 1 & 1/2 & 1 & 0 \\ 0 & 0 & 1 & -1 & -1 \\ 0 & 0 & 0  & 1 & 3/2 \\ 0 & 0 & 0 & 0 & 1 \\ 0 & 0 & 0 & 0 & 0 \end{pmatrix}.
\end{equation*}

When $B$ satisfies the conditions of Theorem \ref{constant2}, the $K$ from Theorem \ref{constant2}(iii) will yield the unitary similarity ${\rm Re} (e^{it}B) = e^{-tK} ({\rm Re} B) e^{tK}$. It is easy to find $K=-K^*$ satisfying $[K,B]=-iB$
as it amounts to solving a system of linear equations (with the unknowns the entries in the lower triangular part of $K$).

When $B$ satisfies the conditions of Theorem \ref{thm1}, but not those of Theorem \ref{constant2}, finding a unitary similarity $U(t)$ so that ${\rm Re} (e^{-it}B) = U(t) ({\rm Re} B) U(t)^*$ becomes much more involved. To go about this one could first find a solution $P(t)$ to Lax's equation
$$-A_1 \sin t + A_2 \cos t = L'(t) = [P(t),L(t)] = [P(t),A_1 \cos t + A_2 \sin t],   $$ which now will not be constant. Next, one would solve the initial value ordinary differential matrix equation \eqref{Ut}. 

To illustrate what a solution $P(t), U(t)$ may look like, we used Matlab to produce the following solution when 
$A_1={\rm Re} \ B$ and $A_2={\rm Im} \ B$ (and thus $L(t)={\rm Re}(e^{-it} B)$) with $B$ as in \eqref{B}: 

$$ 
P(t) = \begin{pmatrix} -\frac{i}{2} & 0 & 0 & \frac{ie^{-2it}}{2} \cr 0 & 0 & 0 & 0 \cr 0 & 0 & i & 0 \cr \frac{ie^{2it}}{2} & 0 & 0 & \frac{3i}{2} \end{pmatrix}, 
$$
$$ V(t) = \begin{pmatrix} 1-e^{-it} & -1-e^{-it} & 1-e^{-it} & 1+e^{-it} \cr 2 & 1 & -1 & 2 \cr 
-2e^{it} & e^{it} & e^{it} & 2e^{it} \cr e^{2it}+e^{it} & -e^{2it}+e^{it} & e^{2it}+e^{it} & e^{2it}-e^{it} \end{pmatrix} , U(t)= V(t)V(0)^{-1}. $$
Note that the columns of $V(t)$ are the eigenvectors of $L(t)$; indeed, we have $$L(t) = V(t) {\rm diag}(-1,-\frac12,\frac12,1) V(t)^{-1}.$$ 


%
%

\section{Acknowledgments}

The research of Hugo J. Woerdeman was supported by Simons Foundation grant 355645 and National Science Foundation grant DMS 2000037.

\bibliographystyle{plain}
 
\end{document}